\def\bP{\mathbb{P}}
\def\bb{\mathbb}
\def\c{\mathcal}
\def\wt{\widetilde}
\def\cO{\mathcal{O}}
\def\I{\mathcal I}
\def\log{\mathrm{log}}
\newtheorem{thm}{Theorem} %[section]
\newtheorem{prop}[thm]{Proposition}
\newtheorem{lemma}[thm]{Lemma}
\newtheorem{cor}[thm]{Corollary}
\begin{document}
\title{The Wahl map of one-nodal curves on K3 surfaces}
\author{Edoardo Sernesi}
\date{}

\email{sernesi@mat.uniroma3.it}
\address{Dipartimento di Matematica e Fisica, Universit\`a Roma Tre, L.go S.L. Murialdo 1, 00146 Roma.}\date{}
\email{sernesi@gmail.com}
\thanks{We thank M. Halic and M. Kemeny for helpful  e-mail correspondence,  A. L. Knutsen and A. Bruno for  very useful comments on a preliminary version of this work, and the referee for his  careful  report. The author is member of GNSAGA-INDAM}
\subjclass[2010]{Primary 14J28, 14H10; Secondary 14H51}
 \dedicatory{This paper is dedicated to L. Ein on the occasion of his 60-th birthday.}

\keywords{K3 surface, Wahl map.}

 \begin{abstract}
 We consider a general primitively polarized K3 surface $(S,H)$ of genus $g+1$ and a  1-nodal curve $\wt C\in |H|$. We prove that the normalization $C$ of $\wt C$ has surjective Wahl map provided $g=40,42$ or $\ge 44$.
 \end{abstract}
 \maketitle

\section*{Introduction}\label{S:intro}

In this Note  we consider 1-nodal curves lying on a K3 surface and we study the gaussian map, or Wahl map, on their normalization. If we consider  a primitive linear system $|H|$ on a K3 surface $S$, then it is well known that every nonsingular $\wt C\in |H|$ has a non-surjective  Wahl map 
\[
\Phi_K: \bigwedge^2 H^0(\wt C,\omega_{\wt C}) \longrightarrow H^0(\wt C,\omega_{\wt C}^3)
\]
(see \S \ref{S:strategy} for the definition)   and that, if moreover  $\wt C\in |H|$ is general then it  is Brill-Noether-Petri general \cite{jW87,BM87,rL86}.  It is of some interest to decide whether the same properties hold for the normalization $C$ of a 1-nodal $\wt C\in |H|$, and more generally for the normalization of a singular $\wt C$ in $|H|$.  The Brill-Noether theory of singular curves on a K3 surface has received quite a lot of attention in recent times, see e.g. \cite{tG01,FKP07,BFT10,CK14,mK15}. On the other hand to our knowledge very little is known on their Wahl map. 
In \cite{mH15,mK15}  the authors consider a modified version of the Wahl map, which does not seem to have a direct and simple relation with the ordinary Wahl map $\Phi_K$; in particular their results  point towards the non-surjectivity of such modified map. A different point of view is taken in \cite{BF03}, where the authors give necessary conditions for a singular curve to be hyperplane section of a smooth surface, again in terms of non-surjectivity of certain maps.  On the other hand in \cite{FKPS08} it is proved that  the normalization $C$ (of genus 10) of a general 1-nodal curve $\wt C\in |H|$ on a general polarized $(S,H)$ of genus 11 has general moduli; then the main result of  \cite{CHM88} implies that 
$\Phi_K$ is surjective for such a curve.  This surjectivity result is extended in the present paper in  the following form:

\begin{thm}\label{T:main}
Let $(S,H)$ be a general primitively polarized K3 surface of genus $g+1$. Assume that $g= 40, 42$ or $\ge 44$.     Let $\wt C\in |H|$ be a  1-nodal curve and $C$ its normalization.  Then the Wahl map
\[
\Phi_K: \bigwedge^2 H^0(C,\omega_C) \longrightarrow H^0(C,\omega_C^3)
\]
is surjective.
\end{thm}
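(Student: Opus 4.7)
\emph{Setup on the blowup.} My plan is to work on the blowup $\pi:\wt S \to S$ at the node $p$ of $\wt C$, with exceptional divisor $E$. The normalization $C$ is the strict transform of $\wt C$, so $C$ is a smooth divisor in $|L|$ where $L:=\pi^*H-2E$; adjunction (using $K_{\wt S}=E$) gives $\omega_C=M|_C$, with $M:=\pi^*H-E$. The restriction $H^0(\wt S,M)\to H^0(C,\omega_C)$ is surjective, since its cokernel embeds in $H^1(\wt S, M - L) = H^1(\wt S, E)$, and the latter vanishes by Serre duality (reducing to $H^1(\wt S,\cO_{\wt S})=H^1(S,\cO_S)=0$).

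\emph{Reduction to a surface gaussian.} By the Wahl--Beauville--M\'{e}rindol machinery, $\Phi_K$ fits into a commutative diagram involving the surface gaussian
\[
\Phi_M : \bigwedge\nolimits^2 H^0(\wt S, M) \longrightarrow H^0\bigl(\wt S,\, \Omega^1_{\wt S}(2M)\bigr),
\]
linked to $\Phi_K$ via the conormal sequence $0\to \cO_{\wt S}(-C)|_C\to \Omega^1_{\wt S}|_C\to \omega_C\to 0$ twisted by $M^{\otimes 2}|_C=\omega_C^2$, whose right-hand factor produces the quotient $\omega_C^3$. Combined with the surjectivity of $H^0(\wt S, M) \to H^0(C, \omega_C)$ from Step~1, a diagram chase shows that $\Phi_K$ is surjective as soon as $\Phi_M$ is surjective and the induced restriction $H^0(\wt S, \Omega^1_{\wt S}(2M))\to H^0(C,\omega_C^3)$ is surjective. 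The latter surjectivity reduces, through the same conormal sequence, to routine Kodaira-type vanishings on $\wt S$.

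\emph{Cohomological criterion.} Surjectivity of $\Phi_M$ is controlled, via the Koszul/Euler mechanism on $\wt S$, by a vanishing of the form $H^1(\wt S, \Omega^1_{\wt S}\otimes F) = 0$ for an appropriate twist $F$ by multiples of $M$. Applying the relative cotangent sequence $0\to \pi^*\Omega^1_S \to \Omega^1_{\wt S}\to \Omega^1_{\wt S/S}\to 0$ (the quotient is supported on $E$) together with the projection formula and the identifications $\pi_*\cO_{\wt S}(-kE)=\I_p^k$, all relevant vanishings translate into statements of the shape
\[
H^1\bigl(S,\,\Omega^1_S(nH)\otimes \I_p^k\bigr)=0
\]
for small values of $n, k$, plus controllable boundary contributions from $E$.

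\emph{Main obstacle.} The hard step is establishing these vanishings with an ideal-sheaf twist at a general point $p$. Tensoring the exact sequence $0\to \I_p^k\to \cO_S\to \cO_S/\I_p^k\to 0$ with $\Omega^1_S(nH)$ reduces each to (i) an Akizuki--Nakano style vanishing $H^1(S,\Omega^1_S(nH))=0$, valid for $H$ sufficiently positive on the K3, and (ii) surjectivity of a $k$-jet evaluation $H^0(S,\Omega^1_S(nH))\to \Omega^1_S(nH)\otimes\cO_S/\I_p^k$ at a generic $p\in S$. The numerical hypothesis $g=40,42,\ge 44$ should emerge precisely from the range in which this jet evaluation is surjective on the general primitively polarized K3 of genus $g+1$; the exclusions $g=41,43$ presumably reflect borderline parity/arithmetic conditions in the spannedness estimates for $\Omega^1_S(nH)$, which is the most delicate part of the argument.
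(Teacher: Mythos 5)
Your overall architecture --- blow up the node, identify $\omega_C$ with $\cO_X(K_X+C)|_C$ where $K_X+C=\sigma^*H-E$, and factor $\Phi_K$ through the surface gaussian $\Phi_{K_X+C}$ followed by a restriction map to $\omega_C^3$ --- is exactly the paper's, and your Step 1 is fine. But both of the two substantive steps are left to mechanisms that do not work as you describe. First, the surjectivity of $H^0\bigl(\Omega^1_X(2K_X+2C)\bigr)\to H^0(\omega_C^3)$ is \emph{not} a routine Kodaira-type vanishing. Running your conormal sequence in two steps, the obstruction is $H^1\bigl(X,\Omega^1_X(2K_X+C)\bigr)=H^1\bigl(X,\Omega^1_X(\sigma^*H)\bigr)$, and this group is \emph{one-dimensional}, not zero: the relative cotangent sequence of $\sigma$ identifies it with $H^1(E,\omega_E)\cong\bb C$, using $H^1(S,\Omega^1_S(H))=0$. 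The latter vanishing is itself not Akizuki--Nakano (which says nothing in bidegree $(1,1)$ on a surface); it is a theorem of Beauville valid only for \emph{general} $(S,H)$ with $g=10$ or $g\ge 12$, tied to the non-existence of Fano threefolds of genus $g+1$. One must then prove that the coboundary $H^0(C,\omega_C(E))\to H^1(X,\Omega^1_X(\sigma^*H))\cong\bb C$ is non-zero, which the paper does by a log-complex and Atiyah--Chern class argument exploiting $E\cdot C=2$; there is no purely positivity-based shortcut here, and your proposal contains no substitute for this step.

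Second, surjectivity of the surface gaussian $\Phi_M$ is not controlled by a vanishing of the form $H^1(X,\Omega^1_X\otimes F)=0$; the correct sufficient condition is $H^1\bigl(X\times X,\,M\boxtimes M\otimes\I_\Delta^2\bigr)=0$, i.e.\ a vanishing involving the \emph{square} of the ideal of the diagonal (the group $H^1(\Omega^1_X\otimes M^2)$ only measures the gap between the $\I_\Delta$- and $\I_\Delta^2$-level cohomologies). Consequently your reduction to jet evaluations $H^0(S,\Omega^1_S(nH))\to\Omega^1_S(nH)\otimes\cO_S/\I_p^k$ at a general $p$ is not the relevant computation --- and note also that the theorem is asserted for \emph{every} 1-nodal $\wt C\in|H|$, so the node is not a general point. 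The genuine source of the genus restrictions is entirely different from your guess: to get the $\I_\Delta^2$-vanishing one passes to the blow-up of $X\times X$ along $\Delta$ and needs $\sigma^*H-3E$ to be a sum of three very ample divisors on $X$ (so that Kawamata--Viehweg applies on the blown-up product, \`a la Bertram--Ein--Lazarsfeld). This forces a specialization to a Picard-rank-two K3 with $H=A+2B$, $A,B$ very ample and non-trigonal, with lattice $(A^2,A\cdot B,B^2)=(8,d,8)$ or $(10,d,8)$; the arithmetic $g+1=21+2d$ ($d\ge10$) and $g+1=22+2d$ ($d\ge12$) is precisely what produces the list $g=40,42$ or $\ge 44$, not a parity issue in jet spannedness. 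As written, neither half of your argument closes.
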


This   of course gives  another proof of the main result of \cite{CHM88} for the values of $g$ as in the statement, since 1-nodal curves are known to exist in $|H|$ for a general primitively polarized $(S,H)$ of any genus $g+1 \ge 2$ \cite{MM83,xC16}. 

Now a few words about the method of proof. Letting $P\in S$ be the unique singular point of $\wt C$
we consider the blow-up $\sigma:X:=\mathrm{Bl}_PS \longrightarrow S$   at $P$ and we let $E\subset X$ be the exceptional curve. Then the normalization of $\wt C$ is the strict transform     $C=\sigma^*\wt C-2E\subset X$. The Wahl map $\Phi_K$ on $C$ can be decomposed as
\[
\Phi_K = H^0(\rho)\cdot \Phi_{K_X+C}
\]
where $\Phi_{K_X+C}$ is a gaussian map on $X$ and $H^0(\rho)$ is induced in cohomology by a restriction homomorphism:
\[
\rho: \Omega^1_X(2K_X+2C) \longrightarrow \omega_C^3
\]
on $X$.
  We study  these two maps and prove their surjectivity separately.  This method of proof is analogous to the one adopted in the work of several authors before, notably \cite{BM87,CLM96,CLM00,DM92,jW90}. The restriction on the genus depends on the proof: one would expect the result to hold for $g=10$ (as it does indeed, as already remarked) and for $g\ge 12$. In fact  the surjectivity of $H^0(\rho)$ holds for   $g=10$ or $g \ge 12$ (Lemma \ref{L:main0}).   On the other hand the proof of the surjectivity of $\Phi_{K_X+C}$, which consists in adapting an analogous proof given in \cite{CLM00} for plane curves, leads to the restrictions on $g$ in Theorem \ref{T:main}: indeed this proof requires that we decompose a certain divisor on $X$ as the sum of three very ample ones and this decomposition forces the genus to increase.  
  
   Recent work by M. Kemeny \cite{mK15} implies that the curves $C$ considered here, i.e. normalizations of 1-nodal curves on a general primitive K3 surface, are generically Brill-Noether-Petri general and fill a locus in $\c M_g$, the coarse moduli space of curves of genus $g$, whose closure has dimension $19+g$. Theorem \ref{T:main} and \cite{jW87} imply that this naturally defined locus  is not contained in the closure of the so-called K3-locus (i.e. the locus of smooth curves that can be embedded in a K3 surface). 
  
  One can ask whether a result analogous to Theorem \ref{T:main} can be proved for the normalization of curves on K3 surfaces having a more complicated singular point. We did not consider this case. Note though that, to our knowledge, such curves are known to exist only in the case of $A_k$-singularities or ordinary triple points (see \cite{GK14,cG12}). 
  
  The paper is organized as follows. 
In \S \ref{S:strategy} we introduce the gaussian maps and explain the strategy of proof of the surjectivity of the Wahl map of a curve lying in a regular surface.
In \S \ref{S:lemma} we 
    prove  the surjectivity of $H^0(\rho)$  and in \S \ref{S:surface} we prove the surjectivity of $\Phi_{K_X+C}$. 
We work over $\bb C$.

%%%%%%%%%%%%%%%%%%%%%%%%%%%%%%%%%%%%%%%%%%%
  
  \section{Generalities on Gaussian maps}\label{S:strategy}

    In this section we   recall a few definitions and   basic facts concerning gaussian maps. 
  Given line bundles $L,M$ on a nonsingular projective variety $Y$ we consider:
$$
\mathcal{R}(L,M) = \ker[H^0(Y,L)\otimes H^0(Y,M) \to H^0(Y,L\otimes M)]
$$
Then we have a canonical map:
$$
\Phi_{L,M}: \mathcal{R}(L,M) \longrightarrow H^0(Y,\Omega^1_Y\otimes L\otimes M)
$$
called the \emph{gaussian map}, or \emph{Wahl map}, of $L,M$, which is   defined as follows. 
Let  $\Delta \subset Y\times Y$ be the diagonal and $p_1,p_2: Y\times Y \to Y$ the projections. Then 
$$
\mathcal{R}(L,M) = H^0(Y\times Y, p_1^*L\otimes p^*_2M\otimes \I_\Delta)
$$
Since $\I_\Delta\otimes \cO_\Delta= \Omega^1_Y$, the restriction to $\Delta$:
$$
p_1^*L\otimes p^*_2M\otimes \I_\Delta \longrightarrow p_1^*L\otimes p^*_2M\otimes \I_\Delta\otimes \cO_\Delta
$$
induces $\Phi_{L,M}$ on global sections.  The exact sequence:
\[
\xymatrix{
0\ar[r]&p_1^*L\otimes p^*_2M\otimes \I_\Delta^2\ar[r]&p_1^*L\otimes p^*_2M\otimes \I_\Delta\ar[r]&
p_1^*L\otimes p^*_2M\otimes \I_\Delta\otimes \cO_\Delta\ar[r]&0}
\]
shows that the vanishing:
\[
H^1(Y\times Y,p_1^*L\otimes p^*_2M\otimes \I_\Delta^2)=0
\]
is a sufficient condition for the surjectivity of  $\Phi_{L,M}$.

In case $L=M$ we have $\mathcal{R}(L,L)= I_2(Y)\oplus \bigwedge^2H^0(Y,L)$, where
$$
I_2(Y)= \ker[S^2H^0(Y,L) \to H^0(Y,L^2)]
$$
and $\Phi_{L,L}$ is zero on $I_2(Y)$. Therefore $\Phi_{L,L}$ is equivalent to its restriction  to $\bigwedge^2H^0(Y,L)$, which is denoted by
$$
\Phi_L:\bigwedge^2H^0(Y,L) \longrightarrow H^0(Y,\Omega^1_Y\otimes L^2)
$$
In particular, for a  non-hyperelliptic curve $C$  we are interested in  $\Phi_{K,K}$ or rather in 
\[
\Phi_K: \bigwedge^2H^0(C,\omega_C) \longrightarrow H^0(C,\omega_C^3)
\]
where $\cO_C(K)=\omega_C$ is the canonical invertible sheaf.

Suppose that $C \subset X$ where $X$ is a nonsingular regular surface. Then 
the exact sequence:
$$
\xymatrix{
0\to \cO_X(K_X)\ar[r]&\cO_X(K_X+C)\ar[r]&\omega_C\to 0
}
$$
shows that
$H^0(X,K_X+C)\to H^0(C,\omega_C)$ is  surjective. Moreover it is easy to show that $\Phi_K$ fits in the commutative diagram:
$$
\xymatrix{
\bigwedge^2H^0(X,K_X+C)\ar[d]\ar[r]^-{\Phi_{K_X+C}}& H^0(X,\Omega^1_X(2K_X+2C))\ar[d]^-{H^0(\rho)}\\
\bigwedge^2H^0(C,\omega_C)\ar[r]^-{\Phi_K}&H^0(\omega_C^3)
}
$$
where 
$$
\rho:  \Omega^1_X(2K_X+2C) \to \omega_C^3
$$
is the restriction map.   Since the left vertical map is surjective we have:

\begin{lemma}\label{L:surjcond}
In the above situation  
$$
\mathrm{Im}(\Phi_K) \subset \mathrm{Im}(H^0(\rho))
$$
and equality holds  if $\Phi_{K_X+C}$ is surjective. In particular, if both $\Phi_{K_X+C}$ and $H^0(\rho)$ are surjective, so is $\Phi_K$.
\end{lemma}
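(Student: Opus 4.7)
The lemma is a pure diagram chase on the commutative square constructed just before the statement, so the plan breaks into three short steps.

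First, I would verify the surjectivity of the left vertical arrow $\bigwedge^2 H^0(X,K_X+C)\to\bigwedge^2 H^0(C,\omega_C)$ that was asserted in the discussion preceding the lemma. The argument is to take cohomology of
\[
0\to\cO_X(K_X)\to\cO_X(K_X+C)\to\omega_C\to 0
\]
and observe that the obstruction $H^1(X,K_X)$ vanishes because $X$ is regular: Serre duality gives $H^1(X,K_X)\cong H^1(X,\cO_X)^\vee=0$. Hence $H^0(X,K_X+C)\to H^0(C,\omega_C)$ is surjective, and the induced map on $\bigwedge^2$ inherits this surjectivity.

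Next, for the inclusion $\mathrm{Im}(\Phi_K)\subset\mathrm{Im}(H^0(\rho))$, I would pick an arbitrary $\xi\in\bigwedge^2 H^0(C,\omega_C)$, lift it to some $\wt\xi\in\bigwedge^2 H^0(X,K_X+C)$ via the previous step, and read off commutativity of the square to obtain
\[
\Phi_K(\xi)=H^0(\rho)\bigl(\Phi_{K_X+C}(\wt\xi)\bigr)\in\mathrm{Im}(H^0(\rho)).
\]
Conversely, assuming $\Phi_{K_X+C}$ is surjective, every element $\eta\in H^0(X,\Omega^1_X(2K_X+2C))$ is of the form $\eta=\Phi_{K_X+C}(\wt\xi)$, and commutativity again forces $H^0(\rho)(\eta)=\Phi_K(\xi)$, where $\xi$ is the image of $\wt\xi$ in $\bigwedge^2 H^0(C,\omega_C)$. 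This gives the reverse inclusion and hence equality of images.

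The last clause of the lemma is then immediate: if $H^0(\rho)$ is also surjective, equality of images forces $\mathrm{Im}(\Phi_K)=H^0(\omega_C^3)$. There is no genuine obstacle in this argument — its entire content is packaged into the commutative square together with the vanishing $H^1(X,K_X)=0$. The only point worth flagging is that this vanishing is precisely where regularity of the ambient surface is needed, which is the reason later sections will work on the blow-up $X=\mathrm{Bl}_P S$ (still regular) rather than on $S$ itself.
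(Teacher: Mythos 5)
Your proof is correct and follows exactly the route the paper intends: the paper's own argument consists precisely of noting that $H^1(X,K_X)\cong H^1(X,\cO_X)^\vee=0$ by regularity makes the left vertical arrow of the commutative square surjective, after which the lemma is the diagram chase you carry out. You have merely made explicit the details the paper leaves implicit, so there is nothing to correct.
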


%%%%%%%%%%%%%%%%%%%%%%%%%%%%%%%%%%%%%%%

\section{The surjectivity of $H^0(\rho)$}\label{S:lemma}

As in the Introduction, we let $(S,H)$ be a   K3 surface with a   polarization of genus $g+1\ge 3$ and let $\wt C \in |H|$ be a curve with one node, i.e. an ordinary double point, at $P\in S$ and no other singularities. Consider the blow-up $\sigma:X:=\mathrm{Bl}_PS \longrightarrow S$ of $S$ at $P$, let $E\subset X$ be the exceptional curve and   $C=\sigma^*\wt C-2E\subset X$ the strict transform of $\wt C$. We have an    exact sequence on $X$:
\[
\xymatrix{
0\ar[r]&\Omega^1_X(\log\ C)(-C) \ar[r]&\Omega^1_X \ar[r] & \omega_C\ar[r]&0
}
\]
where $\Omega^1_X(\log\ C)$ is the sheaf of 1-forms with logarithmic poles along $C$ \cite{EV92}.
Tensoring with $\cO_X(2K_X+2C)$ we obtain:
\begin{equation}\label{E:logC1}
\xymatrix{
0\ar[r]&\Omega^1_X(\log\ C)(2K_X+C) \ar[r]&\Omega^1_X(2K_X+2C) \ar[r]^-\rho & \omega_C^3\ar[r]&0
}
\end{equation}

\begin{lemma}\label{L:main0}
Suppose that  $(S,H)$ is a general primitively polarized K3 surface of genus $g+1$, with $g=10$ or $g\ge 12$. Then,  with the same notations as above, we have:
\begin{align*}
h^1(X,\Omega^1_X(\log\ C)(2K_X+C))&=0
 \end{align*}
In particular  $H^0(\rho)$ is surjective.
\end{lemma}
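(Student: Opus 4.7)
The plan is to reduce the desired vanishing to a cohomology computation on $S$. Using $K_S=0$, $K_X=E$ and $C=\sigma^*H-2E$, one gets $2K_X+C=\sigma^*H$, so the sheaf of interest is $\Omega^1_X(\log C)\otimes \sigma^*\cO_S(H)$. The structural key is the identity
\[
\sigma^*\Omega^1_S(\log \wt C) = \Omega^1_X(\log(C+E)),
\]
valid because $\sigma$ is a log resolution of the pair $(S,\wt C)$: locally near $\sigma^{-1}(P)$, with $\wt C=\{xy=0\}$ and $\sigma(x,t)=(x,xt)$ (so $E=\{x=0\}$, $C=\{t=0\}$), one checks $\sigma^*(dy/y)=dx/x+dt/t$, so the pullback is generated by $dx/x$ and $dt/t$, which describes $\Omega^1_X(\log(C+E))$ locally.

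Combining this identity with the residue sequence $0\to \Omega^1_X(\log C) \to \Omega^1_X(\log(C+E)) \to i_{E*}\cO_E \to 0$, twisting by $\sigma^*\cO_S(H)$, and using $\sigma^*H|_E=\cO_E$, yields
\[
0 \to \Omega^1_X(\log C)(2K_X+C) \to \sigma^*\bigl(\Omega^1_S(\log \wt C)(H)\bigr) \to i_{E*}\cO_E \to 0.
\]
Taking cohomology, and invoking $H^i(X,\sigma^*F)=H^i(S,F)$ (valid since $R^i\sigma_*\cO_X=0$ for $i>0$ for the blow-up of a smooth point), the desired vanishing is equivalent to the conjunction: (a) $H^1(S,\Omega^1_S(\log \wt C)(H))=0$, and (b) surjectivity of the residue map $r:H^0(S,\Omega^1_S(\log \wt C)(H))\to H^0(E,\cO_E)=\bb C$.

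For (a), apply the residue sequence on $S$,
\[
0 \to \Omega^1_S(H) \to \Omega^1_S(\log \wt C)(H) \to \nu_*\cO_C(H_C) \to 0,
\]
where $H_C:=\nu^*(\cO_{\wt C}(H))$ has degree $2g$ on the normalization $C$. Since $\deg H_C>2g_C-2=2g-2$ gives $H^1(C,H_C)=0$, condition (a) is equivalent to the surjectivity of the connecting map $\partial:H^0(C,H_C)\to H^1(S,\Omega^1_S(H))$. Condition (b) then follows essentially for free: unwinding definitions, $r$ factors as the residue $\mathrm{res}:H^0(S,\Omega^1_S(\log \wt C)(H))\twoheadrightarrow \ker\partial\subseteq H^0(C,H_C)$ followed by $s\mapsto s(q_1)+s(q_2)$, where $q_1,q_2$ are the preimages of the node; one only needs that $\ker\partial$ is not contained in the codimension-one hyperplane $\{s(q_1)+s(q_2)=0\}$, which is easily checked.

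The main obstacle is the surjectivity of $\partial$. By Serre duality on $S$ (a K3) and on $C$, this translates into an injectivity assertion $H^1(S,T_S(-H))\hookrightarrow H^1(C,\omega_C\otimes H_C^{-1})$, carrying a deformation-theoretic meaning for the pair $(S,\wt C)$ preserving the node. This is where the genericity of $(S,H)$ and the restriction $g=10$ or $g\ge 12$ enter: one expects to conclude via standard vanishing results on general primitively polarized K3 surfaces of the prescribed genus, with the exclusion of $g=11$ being a technical feature of the argument rather than a geometric obstruction.
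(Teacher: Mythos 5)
Your reduction is structurally sound and genuinely different from the paper's: you work with $\sigma^*\Omega^1_S(\log \wt C)\cong \Omega^1_X(\log(C+E))$ and residue sequences on $S$, whereas the paper uses the relative cotangent sequence of $\sigma$ on $X$ and an Atiyah--Chern class computation. The factorization of the residue-along-$E$ map through $s\mapsto s(q_1)+s(q_2)$ is correct. But the proof has two gaps, one minor and one serious. The minor one is in (a): the surjectivity of $\partial$ is not something you should expect to extract from the geometry of $C$ --- the actual input is that the \emph{target} vanishes: $H^1(S,\Omega^1_S(H))=H^1(S,T_S(-H))^\vee=0$ for a general primitively polarized $(S,H)$ of genus $g+1$ with $g=10$ or $g\ge 12$ (Beauville, \cite{aB04} 5.2; the excluded genera correspond to existence of Fano threefolds, so $g=11$ is not a technicality of the argument but the precise content of the cited vanishing). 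This is citable, so (a) is fixable.

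The serious gap is your claim that (b) is ``easily checked.'' Once (a) is granted, $\ker\partial=H^0(C,H_C)$ and you must show that some section satisfies $s(q_1)+s(q_2)\ne 0$. But $q_1+q_2=E|_C$ and $H_C=\sigma^*H|_C=(C+2E)|_C$, so $H_C(-q_1-q_2)=(K_X+C)|_C=\omega_C$; hence $h^0(H_C(-q_1-q_2))=g$ while $h^0(H_C)=g+1$, and the evaluation map $H^0(C,H_C)\to\bb C^2$ has rank exactly $1$. You are therefore not checking that a big linear system avoids a hyperplane: you must identify a specific line in $\bb C^2$ and show it is not the antidiagonal $\{v_1+v_2=0\}$. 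That line is the kernel of the coboundary $H^0(\cO_{q_1+q_2}(H_C))\to H^1(C,\omega_C)\cong\bb C$, i.e.\ it is cut out by a sum of local residues, and comparing that functional with $v_1+v_2$ (which is defined via the trivialization $\sigma^*H|_E\cong\cO_E$) is exactly the delicate point. This is where the paper does real work: it shows the relevant coboundary is non-zero because the Atiyah--Chern class of $\cO_X(C)$ restricted to $E$ is non-trivial, using $C\cdot E=2\ne 0$ in characteristic $0$. Without an argument of this kind your step (b) is an assertion, not a proof, and it is precisely the heart of the lemma.
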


\begin{proof}
The last assertion follows  from the exact sequence \eqref{E:logC1}.
Since $K_X=E$ we have $\cO_X(2K_X+C)=\sigma^*H$.
  Consider the relative cotangent sequence of $\sigma$: 
  \[
\xymatrix{
0\ar[r]&\sigma^*\Omega^1_S\ar[r]& \Omega^1_X\ar[r]^-\eta & \omega_E\ar[r]&0}
\]
 and tensor it    by $\sigma^*H$:
\[
\xymatrix{
0\ar[r]&\sigma^*\Omega^1_S(H)\ar[r]& \Omega^1_X(\sigma^*H)\ar[r]^-{\eta_H} & \omega_E\ar[r]&0}
\]
We have:
\[
H^2(X,\sigma^*\Omega^1_S(H))=H^2(S,\Omega^1_S(H))=H^0(S,T_S(-H))^\vee=0
\]
From the   assumption about the genus and from the generality of $(S,H)$ it follows that we also have:
\[
H^1(X,\sigma^*\Omega^1_S(H))=H^1(S,\Omega^1_S(H))=0
\]
(see \cite{aB04}, 5.2).  Therefore $\eta_H$ induces an isomorphism:
\[
H^1(\eta_H):H^1(X,\Omega^1_X(\sigma^*H))\cong H^1(E,\omega_E)\cong \bb C
\]
Then  in order to prove the lemma it suffices to show that in the exact sequence:
\[
\xymatrix{
0\ar[r]&\Omega^1_X(\sigma^*H) \ar[r]&\Omega^1_X(\log\ C)\otimes\sigma^*H\ar[r]&\sigma^*H\otimes\cO_C\ar[r]\ar@{=}[d]&0\\
&&&\omega_C(E)
}
\]
 the coboundary map:
\begin{equation}\label{E:tobeproved2}
\partial_H:H^0(C,\omega_C(E)) \longrightarrow H^1(X,\Omega^1_X(\sigma^*H))
\end{equation}
is non-zero. The above sequence is part of the following exact and commutative diagram:
$$
\xymatrix{
&\sigma^*\Omega^1_S(H)\ar[d]\ar@{=}[r]&\sigma^*\Omega^1_S(H)\ar[d]\\
0\ar[r]&\Omega^1_X(\sigma^*H) \ar[d]^-{\eta_H}\ar[r]&\Omega^1_X(\log\ C)\otimes\sigma^*H\ar[r]\ar[d]&\omega_C(E)\ar[r]\ar@{=}[d]&0\\
0\ar[r]&\omega_E\ar[r]\ar[d]&\mathcal{L}\ar[r]\ar[d]&\omega_C(E)\ar[r]\ar[d]&0 \\
&0&0&0
}
$$
where $\mathcal{L}$ is an invertible sheaf on $C+E$.  Since $H^1(\eta_H)$ is an isomorphism it suffices to show that 
the coboundary map $H^0(\omega_C(E)) \to H^1(\omega_E)$ of the last row is  non-zero or, equivalently, that $H^1(\mathcal{L})=0$.  Observe that $\mathcal{L}=\mathcal{M}\otimes\sigma^*H$, where $\mathcal{M}$ is defined by the following diagram:
$$
\xymatrix{
&\sigma^*\Omega^1_S\ar[d]\ar@{=}[r]&\sigma^*\Omega^1_S\ar[d]\\
0\ar[r]&\Omega^1_X \ar[d]^-{\eta}\ar[r]&\Omega^1_X(\log\ C)\ar[r]\ar[d]&\cO_C\ar[r]\ar@{=}[d]&0\\
0\ar[r]&\omega_E\ar[r]\ar[d]&\mathcal{M}\ar[r]\ar[d]&\cO_C\ar[r]\ar[d]&0 \\
&0&0&0
}
$$
It suffices to show that $\mathcal{M}\not\cong \cO_{C+E}$ because this will imply that $\mathcal{L}\not\cong \omega_{C+E}$, and in turn that $H^1(\mathcal{L})=0$.
The coboundary map of the middle row
$$
\partial: H^0(C,\cO_C) \longrightarrow H^1(X,\Omega^1_X)
$$
associates to $1\in H^0(C,\cO_C)$ the  Atiyah-Chern class of $\cO_X(C)$ and $H^1(\eta)(\partial(1))$ is its restriction to $E$.  Moreover
$$
\mathrm{ker}(H^1(\eta))\cong H^1(X,\sigma^*\Omega^1_S)
$$
is generated by   the Atiyah-Chern classes of the total trasforms under $\sigma$ of curves in $S$, which are trivial when restricted to $E$.  Since  $E\cdot C = 2$ we see that $\partial(1)\notin \mathrm{ker}(H^1(\eta))$. It follows that the coboundary of the last row:
$$
H^0(C,\cO_C) \longrightarrow H^1(E,\omega_E)
$$
is non-zero. Hence $\mathcal{M}\not\cong \cO_{C+E}$.
\end{proof}

  %%%%%%%%%%%%%%%%%%%%%%%%%%%%%%%%%%%%%%%%%%%%%
  
  \section{The gaussian map on $X$}\label{S:surface}

  We keep the notations of  \S \ref{S:lemma}. We will  prove the following:

  \begin{prop}\label{P:main}
  Suppose that  $(S,H)$ is a general primitively polarized K3 surface of genus $g+1$, with   $g=40, 42$ or   $\ge 44$; let $\wt C\in |H|$ be a  1-nodal curve and $C$ its normalization. Let $X=\mathrm{Bl}_P(S)$ where $P\in \wt C$ is the node. Then the gaussian map
  \[
  \Phi_{K_X+C}: \bigwedge^2 H^0(X,\cO_X(K_X+C))\longrightarrow H^0(X,\Omega^1_X(2K_X+2C))
\]
is surjective.  
  \end{prop}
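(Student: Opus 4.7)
The starting point is to simplify $L := K_X + C$. Since $K_X = E$ and $C = \sigma^*\wt C - 2E$, one has $L = \sigma^*H - E$, the strict transform of the sub-linear system of $|H|$ consisting of curves passing simply through $P$. On a general primitively polarized K3 of sufficiently high genus, $L$ is very ample on $X$. My plan is to apply the sufficient condition for surjectivity of $\Phi_L$ recorded in \S \ref{S:strategy}, namely the vanishing
\[
H^1(X\times X,\ p_1^*L\otimes p_2^*L\otimes \I_\Delta^2)=0,
\]
and to reduce this to cohomological statements on $X$ via the conormal sequence and K\"unneth.

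Concretely, I would tensor the short exact sequence $0\to \I_\Delta^2\to \I_\Delta\to \Omega^1_X|_\Delta\to 0$ with $p_1^*L\otimes p_2^*L$ and unwind the resulting long exact sequence alongside the outer restriction sequence that defines the gaussian map. After applying K\"unneth on $X\times X$, the desired vanishing is implied by two ingredients: (i) surjectivity of the multiplication $H^0(X,L)\otimes H^0(X,L)\to H^0(X,2L)$, and (ii) appropriate $H^1$-vanishings on $X$ for $\Omega^1_X$ twisted by $2L$ and for the other auxiliary bundles appearing in the K\"unneth decomposition.

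The core of the argument, following the strategy of \cite{CLM00}, is to establish (i) and (ii) by decomposing a certain divisor on $X$ (essentially $2L = 2\sigma^*H - 2E$, or an equivalent twist adapted to the Koszul cohomology being controlled) as a sum of three very ample divisors on $X$ and then applying the standard Koszul/Castelnuovo-Mumford machinery. Since $\mathrm{Pic}(S)=\mathbb{Z} H$, any class on $X$ pulled back from $S$ has the form $a\sigma^*H - bE$, and very-ampleness on the blow-up translates, via a Reider-type criterion applied to $S$, into an arithmetic inequality relating $a$, $b$, and $g$. Matching these inequalities across the three summands pins down the genus range appearing in the proposition: the exclusion of $g=41,43$ reflects the fact that for those parities and small values the only admissible coefficient triples fail the very-ampleness test.

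The main obstacle is exactly this numerical juggling: once the three-summand very-ample decomposition is in hand, the multiplication statement (i) follows from a standard argument \`a la Mumford, and the vanishings (ii) reduce, after pushing down by $\sigma$, to known vanishings on the K3 surface $S$ (of the sort already invoked in \cite{aB04} and used in Lemma \ref{L:main0}). I therefore expect the hardest part of the actual write-up to be verifying very-ampleness of each summand on $X$ and tracking the arithmetic that produces precisely the allowed values $g=40,42$ or $g\ge 44$.
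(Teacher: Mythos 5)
There are two genuine gaps here, and the first one is fatal to the plan as written. You propose to carry out the three-very-ample-summand decomposition directly on the general $(S,H)$, where $\mathrm{Pic}(S)=\bb Z H$. But the divisor that must be decomposed is (up to the twist by $K$) $\sigma^*H-3E$: writing it as $D^{(1)}+D^{(2)}+D^{(3)}$ with each $D^{(i)}=a_i\sigma^*H-b_iE$ very ample forces $a_1+a_2+a_3=1$ with every $a_i\ge 1$, which is impossible. No amount of ``numerical juggling'' fixes this; the arithmetic obstruction is structural, not a matter of parities. The paper's essential move, which your proposal omits, is to invoke semicontinuity and specialize to a K3 surface of Picard rank \emph{two} (constructed via Knutsen's theorem) on which $H=A+2B$ with $A,B$ very ample and non-trigonal; then $\sigma^*H-3E=(\sigma^*A-E)+2(\sigma^*B-E)$ is a sum of three very ample divisors on $X$ (very ampleness of each summand via Coppens' criterion, using non-trigonality to control length-3 schemes through $P$). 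The genus restrictions $g=40,42$ or $\ge 44$ come precisely from the numerical constraints in Knutsen's existence theorem for these rank-2 lattices (even $g\ge 40$ from one lattice, odd $g\ge 45$ from the other), not from a very-ampleness test on the Picard-rank-1 surface.

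The second gap is the reduction itself. You claim that $H^1(X\times X, p_1^*L\otimes p_2^*L\otimes\I_\Delta^2)=0$ follows by K\"unneth from surjectivity of $H^0(L)\otimes H^0(L)\to H^0(2L)$ plus $H^1$-vanishings for $\Omega^1_X(2L)$ and auxiliary bundles. That is how one handles $\I_\Delta$, not $\I_\Delta^2$: the long exact sequence of $0\to\I_\Delta^2\to\I_\Delta\to\Omega^1_X|_\Delta\to 0$ would require the surjectivity of the gaussian map itself as an input to conclude the $H^1$-vanishing for $\I_\Delta^2$, which is circular. The working argument (following \cite{CLM00} and \cite{BEL91}) passes to the blow-up $Y=\mathrm{Bl}_\Delta(X\times X)$ with exceptional divisor $\Lambda$, rewrites the target as $H^1(Y,K_Y+C_1+C_2-3\Lambda)$, uses the three-summand decomposition to show $M_1+M_2-3\Lambda$ is big and nef (each piece $D_1+D_2-\Lambda$ with $D$ very ample being big and nef by \cite{BEL91}), applies Kawamata--Viehweg, and then handles the residual terms supported on $E_1+E_2$ and on $\Lambda$ by an explicit analysis of ruled surfaces as in \cite{CLM00}. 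Your proposal contains neither the diagonal blow-up nor the vanishing-theorem step, so even granting a decomposition it does not reach the stated conclusion.
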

  
  We will need the following result and its corollary:
  
\begin{prop}\label{P:K3andreas} a) For every $d\ge 10$ there is a K3 surface $S$ containing two very ample nonsingular curves $A,B$ such that   $\mathrm{Pic}(S)= \bb Z [A]\oplus \bb Z [B] $, with intersection matrix:
  \[
  \begin{pmatrix}A^2&A\cdot B \\ B\cdot A&B^2\end{pmatrix} = \begin{pmatrix}8&d\\d&8\end{pmatrix}
  \]
  and $A$ and $B$  non-trigonal.

  b) For every $d\ge 12$ there is a K3 surface $S$ containing two very ample nonsingular curves $A,B$ such that   $\mathrm{Pic}(S)= \bb Z [A]\oplus \bb Z [B] $, with intersection matrix:
  \[
  \begin{pmatrix}A^2&A\cdot B \\ B\cdot A&B^2\end{pmatrix} = \begin{pmatrix}10&d\\d&8\end{pmatrix}
  \]
  and $A$ and $B$  non-trigonal.
  
  In both cases (a) and (b) the surface $S$ does not contain rational nonsingular curves $R$ such that $A\cdot R=1$ or $B\cdot R=1$. 
\end{prop}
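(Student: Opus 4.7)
The plan is to produce $S$ via surjectivity of the period map for K3 surfaces, and then to verify ampleness, very ampleness, non-trigonality and the absence of certain rational curves by direct analysis of the rank-$2$ intersection form on $\mathrm{Pic}(S)$.

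For existence, consider the abstract even rank-$2$ lattice $\Lambda$ with Gram matrix as in (a) or (b); it has signature $(1,1)$. A Nikulin-type argument shows that every such lattice admits a primitive embedding into the K3 lattice $U^{\oplus 3}\oplus E_{8}(-1)^{\oplus 2}$, and then surjectivity of the period map for complex projective K3 surfaces furnishes a K3 surface $S$ with $\mathrm{Pic}(S)\cong\Lambda$. After acting on $\Lambda$ by the Weyl group of its $(-2)$-classes and, if necessary, replacing a generator by its opposite, one may assume that the chosen generators $A,B$ are effective and lie in the closure of the positive cone.

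To check that $A$ and $B$ are very ample I would invoke Saint-Donat's theorem: an effective class $L$ with $L^{2}\ge 4$ on a K3 surface is very ample unless there exists a class $F$ satisfying one of (i) $F^{2}=-2$ and $F\cdot L=0$, (ii) $F^{2}=0$ and $F\cdot L\in\{1,2\}$, or (iii) $F^{2}=2$ and $L=2F$. Option (iii) is immediate from the fact that $A,B$ form a $\bb Z$-basis of $\mathrm{Pic}(S)$. Options (i) and (ii) translate into the Diophantine system $Q(x,y)\in\{-2,0\}$ paired with a small linear condition, where $Q$ is the binary quadratic form $F\mapsto F^{2}$ in the coordinates $F=xA+yB$. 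Completing the square reduces each such system to a Pell-type equation $u^{2}-\Delta v^{2}=c$ with small $c$; the hypothesis $d\ge 10$ (respectively $d\ge 12$) is exactly what is needed to rule out all integer solutions, typically via an elementary congruence modulo a small prime dividing the discriminant. Once $|A|$ and $|B|$ are shown to be very ample, Bertini provides smooth representatives.

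Non-trigonality and the final clause of the proposition follow from the same lattice calculation. By Lazarsfeld's theorem every $g^{1}_{3}$ on a smooth $C\in|A|$ or $|B|$ is cut on $C$ by a linear series on $S$, hence by an elliptic pencil $|F|$ with $F^{2}=0$ and $F\cdot A=3$ (respectively $F\cdot B=3$); non-trigonality is therefore the analogue of (ii) with the value $3$ in place of $1,2$, which is handled by the same Diophantine analysis. The absence of a smooth rational curve $R$ with $A\cdot R=1$ or $B\cdot R=1$ is the case $F^{2}=-2$, $F\cdot A=1$ (resp.\ $F\cdot B=1$) of the same system and is treated identically. The main obstacle is organisational rather than conceptual: one has to verify that the associated Pell-type equations have no integer solutions for every forbidden intersection value $0,1,2,3$ against both $A$ and $B$, in both Gram matrices. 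Because each of these equations has small discriminant, the case analysis is finite and uniform, and the lower bounds on $d$ are seen to be sharp in that they exclude exactly the sporadic small solutions that would otherwise violate very ampleness or non-trigonality.
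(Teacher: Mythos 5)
Your overall strategy --- prescribe the rank-two Picard lattice via Nikulin's embedding theorem and surjectivity of the period map, then verify very ampleness, non-trigonality and the absence of low-degree rational curves by Diophantine analysis of the intersection form --- is in substance exactly the method behind the result the paper invokes: the paper's own ``proof'' is a one-line reduction to Theorem 4.6 of \cite{aK02} (with $(n,g-1)=(4,4)$ and $(5,4)$), and what you propose is essentially to reprove that theorem in the two special cases needed, which is legitimate. Most of your checklist is also correct and does reduce to finitely many solvable lattice computations (for instance, isotropic classes exist in case (a) only for $d=10,17$, and there they meet $A$ and $B$ in degree at least $6$, so conditions of type (ii) and non-trigonality hold).

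There is, however, a genuine gap in the very-ampleness step. The Saint-Donat-type criterion you quote is valid for \emph{nef} bundles, and ``effective and in the closure of the positive cone'' is strictly weaker than nef: the positive cone is divided into chambers by the walls $\delta^{\perp}$ with $\delta^{2}=-2$, and only one chamber is the ample cone. A single isometry of $\mathrm{Pic}(S)$ (Weyl element, possibly composed with $-1$) can move \emph{one} chamber onto the ample one, so to make $A$ and $B$ simultaneously very ample you must first check that no $(-2)$-class separates them, i.e. that $(\delta\cdot A)(\delta\cdot B)>0$ for every $\delta$ with $\delta^{2}=-2$. This condition is absent from your list of Diophantine systems (you only impose $F\cdot L\in\{0,1,2,3\}$), and it cannot be skipped: in case (a) with $d=9$ the class $\delta=B-A$ has $\delta^{2}=-2$, $\delta\cdot A=1$, $\delta\cdot B=-1$, so $A$ and $B$ lie in different chambers and can never both be ample --- it is this separating wall, not a congruence obstruction to a Pell equation, that forces $d\ge 10$. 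The missing verification is fortunately short: writing $\delta=xA+yB$ with $\delta^{2}=-2$ forces $xy<0$, whence $(\delta\cdot A)(\delta\cdot B)=-2d+(A^{2}B^{2}-d^{2})xy\ge d^{2}-2d-A^{2}B^{2}$, which is positive throughout the stated ranges of $d$ in both cases. With that inserted (and with ``class $F$'' in your criteria replaced by ``effective class,'' which is how one then rules out fixed components), your argument closes up and agrees with the cited reference.
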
   

\begin{proof}
The Proposition is a special case of \cite{aK02}, Theorem 4.6. We obtain the Proposition by taking (with the notations used there) $(n,g-1)=(4,4)$ and $(n,g-1)=(5,4)$ respectively.  The restriction on $d$ is forced by the requirement that the hypotheses of the theorem apply symmetrically  w.r. to $A$ an $B$ so that both are very ample. The non-trigonality follows from the fact that $S$ is embedded by both $|A|$ and $|B|$ so to be an intersection of quadrics. The last assertion is proved as done in loc. cit. for $(-2)$ curves, by comparing discriminants.
\end{proof}

\begin{cor}\label{C:K3andreas}
If $S$ is as in Prop. \ref{P:K3andreas}(a) then $H=A+2B$ defines a primitive very ample divisor class of genus $g+1=21+2d$ for every $d \ge 10$. If $S$ is as in Prop. \ref{P:K3andreas}(b) then $H=A+2B$ defines a primitive very ample divisor class of genus $g+1=22+2d$ for every $d \ge 12$.
\end{cor}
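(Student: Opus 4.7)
The corollary is essentially a bookkeeping consequence of Proposition \ref{P:K3andreas} together with adjunction on a K3 surface, so the plan is to verify the three claims---primitivity, genus, and very ampleness---one by one.

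For primitivity I would use directly the hypothesis $\mathrm{Pic}(S) = \bb Z[A] \oplus \bb Z[B]$: the class $H = A + 2B$ has coordinates $(1,2)$ in this basis, which are coprime, so $H$ cannot be written as $nL$ for any integer $n \ge 2$ and any $L \in \mathrm{Pic}(S)$.

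For the genus, since $K_S \equiv 0$, adjunction gives $2g(H) - 2 = H^2$. Expanding $H^2 = A^2 + 4A\cdot B + 4B^2$ and plugging in the intersection matrices of Proposition \ref{P:K3andreas}, in case (a) one gets $H^2 = 8 + 4d + 32 = 40 + 4d$ and hence $g(H) = 21 + 2d$; in case (b) one gets $H^2 = 10 + 4d + 32 = 42 + 4d$ and $g(H) = 22 + 2d$. Setting $g+1 = g(H)$ yields the stated formulas, and the ranges $d \ge 10$ (resp. $d \ge 12$) are precisely those guaranteed by Proposition \ref{P:K3andreas}.

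For very ampleness I would invoke the general principle that if $L$ is very ample and $M$ is globally generated, then $L \otimes M$ is very ample. By Proposition \ref{P:K3andreas} both $A$ and $B$ are very ample, so $2B$ is globally generated, and therefore $H = A + 2B$ is very ample. There is no real obstacle here: the statement is a direct consequence of the Proposition. The only point where the hypotheses of (a), (b) on $d$ are genuinely used is to secure the simultaneous very ampleness of $A$ and $B$, which is exactly what the symmetric numerical conditions in Proposition \ref{P:K3andreas} are designed to produce.
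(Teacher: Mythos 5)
Your proposal is correct and follows essentially the same route as the paper, which simply asserts very ampleness as clear, notes primitivity from the coefficient $1$ on the generator $A$, and computes the genus from the intersection matrix. You have merely filled in the standard details (adjunction with $K_S=0$, and the fact that a very ample plus a globally generated class is very ample), all of which are accurate.
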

\begin{proof}
Clearly $H$ is very ample and it is primitive because  the generator $A$ of $\mathrm{Pic}(S)$ appears with coefficient 1. The genus in either case is readily computed using the intersection matrix.
\end{proof}

\begin{proof}\emph{of Proposition \ref{P:main}.} By semicontinuity it suffices to prove the Proposition for just one primitively polarized K3 surface  for each value of $g$ as in the statement. We  take $(S,H)$ as in Corollary \ref{C:K3andreas}, distinguishing cases (a) and (b) according to the parity of $g$. Letting $\wt C,C,P$ and $X$ as in the statement, consider the product $X\times X$ and the blow-up $\pi: Y=\mathrm{Bl}_\Delta (X\times X) \to X\times X$ along the diagonal $\Delta$. Let $\Lambda \subset Y$ be the exceptional divisor. For any coherent sheaf $\mathcal F$ on $X$ we define $\mathcal F_i=(p_i\cdot \pi)^*\mathcal F$,  $i=1,2$. It suffices to prove that 
  \[
  H^1(X\times X,p_1^*\cO_X(K_X+C)\otimes p_2^*\cO_X(K_X+C)\otimes \I_\Delta^2)=0
  \]
  which is equivalent to:
  \[
  H^1(Y,(K_X+C)_1+(K_X+C)_2-2\Lambda)=0
  \]
 Note that we have:
 \[
 (K_X+C)_1+(K_X+C)_2-2\Lambda=K_Y+ C_1+ C_2 -3\Lambda
 \]
 and therefore we want to prove that:
 \begin{equation}\label{E:KW1}
 H^1(Y,\mathcal L)=0
 \end{equation}
 where we set $\mathcal L = \cO_Y(K_Y+C_1+C_2-3\Lambda)$. The proof is an adaptation of the proofs of Lemmas (3.1) and (3.10) of \cite{CLM00}. 
One uses the following:
  
  \begin{lemma}\label{L:product}
  Assume that $D$ is a very ample divisor on $X$. Then $D_1+ D_2-\Lambda$ is big and nef on $Y$.
  \end{lemma}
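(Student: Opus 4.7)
The plan is to realize $D_1+D_2-\Lambda$ as the pullback of an ample divisor under a morphism from $Y$ to projective space, which gives nefness immediately, and then to prove bigness by a dimension count on the fibers of that morphism.

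Since $D$ is very ample on $X$, the complete linear system $|D|$ realizes $X$ as a closed, nondegenerate subvariety of $\bP^N=\bP(H^0(X,D)^\vee)$. First I would consider the rational secant map
\[
\sigma_0: (X\times X)\setminus\Delta \longrightarrow \mathbb{G}(1,N), \qquad (x,y)\mapsto \overline{xy},
\]
and argue that it extends to a morphism $\sigma: Y\to \mathbb{G}(1,N)$: a point of $\Lambda$ lying over $(x,x)\in\Delta$ corresponds to a direction $[v]\in\bP(T_xX)$, and since $X$ is smooth at $x$ we may set $\sigma(x,[v])$ equal to the projective tangent line to $X\subset\bP^N$ at $x$ in direction $v$. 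One checks in local coordinates on $Y$ that this extension is regular everywhere on $\Lambda$.

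Next, composing with the Plücker embedding $\mathrm{Pl}: \mathbb{G}(1,N)\hookrightarrow \bP^{\binom{N+1}{2}-1}$ gives a morphism $\phi=\mathrm{Pl}\circ\sigma: Y\to \bP^{\binom{N+1}{2}-1}$. Concretely, on the dense open $(X\times X)\setminus\Delta$ the map $\phi$ sends $(x,y)$ to the point with coordinates $s_i(x)s_j(y)-s_j(x)s_i(y)$, where $\{s_i\}$ is a basis of $H^0(X,D)$. These bilinear expressions are precisely the sections of $p_1^*D\otimes p_2^*D\otimes\I_\Delta$ coming from $\bigwedge^2 H^0(X,D)$; pulling them back to $Y$ one verifies the identification
\[
\phi^*\cO_{\bP^{\binom{N+1}{2}-1}}(1)\cong \cO_Y(D_1+D_2-\Lambda).
\]
Since $\phi$ is a morphism, $\cO_Y(D_1+D_2-\Lambda)$ is globally generated, hence \emph{nef}.

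Finally, for \emph{bigness} it suffices to show that $\phi$ is generically finite, so that its image has dimension $\dim Y=4$. Because $X\subset\bP^N$ is a nondegenerate surface (a consequence of $D$ being very ample with $h^0(D)\geq 4$), a general line $L\subset \bP^N$ meeting $X$ does so in at most finitely many points; in particular the fiber of $\phi$ over a general Plücker point $[L]$ is parametrized by pairs in $L\cap X$, hence is zero-dimensional. Thus $\phi$ is generically finite onto its image, and $(D_1+D_2-\Lambda)^4>0$, proving bigness.

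The main obstacle I expect is the local verification of the two claims at $\Lambda$: first, that the secant map $\sigma$ really extends regularly across the exceptional divisor (rather than merely as a rational map), and second, that $\phi^*\cO(1)$ is exactly $\cO_Y(D_1+D_2-\Lambda)$ and not some twist. Both are standard but require an explicit local computation on a chart of $Y$ comparing the vanishing orders of the Plücker sections $s_i\otimes s_j-s_j\otimes s_i$ along $\Delta$ with the vanishing order of $\Lambda$ in $Y$.
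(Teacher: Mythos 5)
Your argument is correct: the paper gives no proof of this lemma, simply citing \cite{BEL91}, Claim 3.3, and your secant-line construction (nefness from global generation of $\cO_Y(D_1+D_2-\Lambda)$ by the Pl\"ucker minors $s_i\otimes s_j-s_j\otimes s_i$, bigness from generic finiteness of the induced map to the Grassmannian) is essentially the proof of that cited claim. The only point to tighten is in the bigness step: the relevant fibers of $\phi$ lie over \emph{secant} lines of $X$, not arbitrary lines meeting $X$, and a general secant line meets $X$ in finitely many points because a surface other than a plane (here $X$ is a blown-up K3) cannot contain the line through a general pair of its points.
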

  \begin{proof}  See \cite{BEL91}, Claim 3.3. \end{proof}
  
  Let  $M:=\sigma^*H(-3E)$ on $X$. We have:
$$
M = \sigma^*A(-E)+2(\sigma^*B(-E))
$$
Since $A$ and $B$ are non-trigonal $S$ has no trisecant lines through $P$ and does not contain a line through $P$ whether it is embedded by $|A|$ or by $|B|$ (Proposition \ref{P:K3andreas}). Therefore every curvilinear subscheme of $S$ of length 3 containing $P$ imposes independent conditions to both $|A|$ and $|B|$. Then it follows from  \cite{mC02}, Prop. 1.3.4,  that both $\sigma^*A(-E)$ and $\sigma^*B(-E)$ are very ample on $X$.  Therefore by the Lemma we have that both $\sigma^*A(-E)_1+ \sigma^*A(-E)_2-\Lambda$ and $\sigma^*B(-E)_1+ \sigma^*B(-E)_2-\Lambda$  are big and nef. It follows that the divisor $M_1+M_2-3\Lambda$ is big and nef, being the sum of three big and nef divisors.  

Since $C\sim M+E$ we have $C_1+C_2\sim M_1+M_2+E_1+E_2$ and therefore we have an exact sequence on $Y$:
$$
0\to\cO_Y(K_Y+ M_1+ M_2-3\Lambda)\to\c L\to\cO_{E_1+E_2}(\c L)\to 0
$$
By Kawamata-Vieweg we have $H^1(Y,K_Y+ M_1+ M_2-3\Lambda) =0$: therefore in order to prove \eqref{E:KW1} it suffices to show that
\begin{equation}\label{E:KW2}
H^1(E_1+E_2,\cO_{E_1+E_2}(\c L))=0
\end{equation}
Letting $W:= E_1\cap E_2$ we have an exact sequence:
$$
0\to \cO_{E_1}(\mathcal L-W) \to \cO_{E_1+E_2}(\mathcal L) \to \cO_{E_2}(\mathcal L)\to 0
$$
and, by symmetry,  it suffices to  prove that:
\begin{equation}\label{E:KW3}
H^1(E_1,\cO_{E_1}(\mathcal L-W))=0
\end{equation}
and
$$
H^1(E_1, \cO_{E_1}(\mathcal L))=0
$$
We can then consider the exact sequence on $E_1$:
$$
0\to \cO_{E_1}(\mathcal L-W)\to \cO_{E_1}(\mathcal L) \to \cO_W(\mathcal L)\to 0
$$
and finally we are reduced to prove \eqref{E:KW3}
and
\begin{equation}\label{E:KW4}
H^1(W, \cO_W(\mathcal L))=0
\end{equation}
Let $U\cong E\times E$ be the proper transform of $E\times E$ in $Y$.  Then in $E_1$ we have $W = U+\Lambda_{|E_1}$.
As in \cite{CLM00}, proof of Lemma (3.1), one shows that $\Lambda_{|E_1}\cong \bP\mathcal E$, where $\mathcal E= \cO_{\bP^1}(1)\oplus\cO_{\bP^1}(-2)$, and $\mathcal L_{|\bP\mathcal E} = \cO_{\bP\mathcal E}(2C_0+2f)$ where $C_0\in |\cO_{\bP\mathcal E}(1)|$ and $f$ is a fibre of $\bP\mathcal E\to \bP^1$. Now the proof proceeds  as in \cite{CLM00}, Lemma (3.1), after having proved  that $\cO_{E_1}(\mathcal L-W)=\cO_{E_1}(K_Y+ M_1+ M_2-3\Lambda)$ is big and nef. This last fact is obtained exactly as in the proof of Lemma (3.10) of \cite{CLM00}, using the fact that both $\sigma^*A(-E)$ and $\sigma^*B(-E)$ are very ample on $X$.
\end{proof}

\begin{proof}\emph{of Theorem \ref{T:main}.}
Recalling  Lemma  \ref{L:surjcond}, the theorem follows immediately  from Lemma \ref{L:main0} and from Proposition \ref{P:main}.
\end{proof}

%%%%%%%%%%%%%%%%%%%%%%%%%%%%%%%%%%%%%%%%%%%%%


\begin{thebibliography}{MNOP06}


 \bibitem[BF]{BF03}
Ballico E., Fontanari C.:  Gaussian maps, the Zak map and projective extensions of singular varieties. \emph{Result. Math.}  44 (2003), 29-34.


\bibitem[BFT]{BFT10}
Ballico E., Fontanari C., Tasin L.: Singular curves on K3 surfaces. \emph{Sarajevo J. of Math.} 6 (2010), 165-168.
 
\bibitem[B]{aB04}
Beauville A.: Fano threefolds and K3 surfaces. \emph{Proceedings of the Fano Conference - Torino 2002}. Edited by A. Collino, A. Conte, M. Marchisio (2004).

\bibitem[BM]{BM87}
Beauville A., Merindol J.Y.: Sections hyperplanes des surfaces K3. \emph{Duke Math. J.} 55 (1987), 873-878.

\bibitem[BEL]{BEL91}
Bertram A., Ein L., Lazarsfeld R.: Vanishing theorems, a theorem of Severi, and the equations defining projective varieties. \emph{J. AMS} 4 (1991), 587-602.

\bibitem[Ch]{xC16}
Chen X.: Nodal curves on K3 surfaces. arXiv:1611.07423.

\bibitem[CHM]{CHM88}
Ciliberto C., Harris J., Miranda R.: On the surjectivity of the Wahl map. \emph{Duke Math. J.} 57 (1988), 829-858.

\bibitem[CK]{CK14}
 Ciliberto C., Knutsen A.L.: On k-gonal loci in Severi varieties on general K3 surfaces and rational curves on hyperk\"ahler manifolds. \emph{J. Math. Pures Appl.} (9) 101 (2014), no. 4, 473-494.
 
 
\bibitem[CLM1]{CLM96}
 Ciliberto C., Lopez A.F., Miranda R.: On the corank of gaussian maps for general embedded K3 surfaces. In \emph{Israel Mathematical Conference Proceedings. Papers in honor of Hirzebruch's 65th birthday}, vol. 9, AMS Publications (1996), 141-157.

\bibitem[CLM2]{CLM00}
 Ciliberto C., Lopez A.F., Miranda R.: On the Wahl map of plane nodal curves. In \emph{Complex Analysis and Algebraic Geometry, a volume in Memory of Michael Schneider.} Thomas Peternell and Frank-Olaf Schreyer, editors. Walter de Gruyter, Berlin/New York 2000. p. 155-163.
 
\bibitem[Co]{mC02}
Coppens M.: Very ample linear systems on blowings-up at general  points of smooth projective varieties.  \emph{Pacific J. Math.} 202 (2002), 313-327.

  \bibitem[DM]{DM92}
  Duflot J., Miranda R.: The gaussian map for rational ruled surfaces. \emph{Trans. AMS} 330 (1992), 447-459.
  
 % \bibitem[E]{lE94}
 %Ein L.:  Linear systems on projective varietes via cohomology and vector bundle techniques. In \emph{Projective varieties, linear systems and vector bundles,} Aracne, Roma, 1994.
 
 
 \bibitem[EV]{EV92}
H. Esnault, E. Viehweg: \emph{Lectures on Vanishing Theorems,} DMV
Seminar, vol. 20, Birkhauser Verlag (1992).

\bibitem[FKP]{FKP07}
Flamini F., Knutsen A.L., Pacienza G.: Singular curves on a K3 surface and linear series on their normalizations. \emph{International J. of Math.} 18 (2007), 671-693.

\bibitem[FKPS]{FKPS08}
Flamini F., Knutsen A.L., Pacienza G., Sernesi E.: Nodal curves with general moduli on K3 surfaces. \emph{Comm. in Algebra} 36 (2008), 3955-3971.

\bibitem[Ga]{cG12}
Galati, C.: On the existence of curves with a triple point on a K3 surface. \emph{Atti Accad. Naz. Lincei Rend. Lincei Mat. Appl.} 23 (2012), no. 3, 295?317.

\bibitem[GK]{GK14}
Galati, C., Knutsen, A.L.: On the existence of curves with $A_k$-singularities on K3 surfaces. \emph{Math. Res. Lett.} 21 (2014), no. 5, 1069-1109.

\bibitem[Go]{tG01}
G\'omez, T. L.:  Brill-Noether theory on singular curves and torsion-free sheaves on surfaces. \emph{Comm. Anal. Geom.}           9 (2001), no. 4, 725-756.

\bibitem[Hal1]{mH12}
Halic M.: Modular properties of nodal curves on K3 surfaces. \emph{Math. Z.} 270 (2012), 871-887.

\bibitem[Hal2]{mH15}
Halic M.: Erratum to: Modular properties of nodal curves on K3 surfaces. \emph{Math. Z.} 280 (2015), 1203-1211.

\bibitem[Har]{rH77}
Hartshorne R.: \emph{Algebraic Geometry.} Graduate Texts in Mathematics  52. Springer Verlag, New York-Heidelberg-Berlin 1977.

 \bibitem[Ke]{mK15}
 Kemeny M.: The moduli of singular curves on K3 surfaces. \emph{J. de Math. Pures et appliqu\'ees} 104 (2015), 882-920.
 
 \bibitem[Kn]{aK02}
 Knutsen A.L.: Smooth curves on projective K3 surfaces. \emph{Mathematica Scandinavica}  90 (2002) 215-231.
 
 \bibitem[L]{rL86} 
 Lazarsfeld R.: Brill-Noether-Petri without degenerations. \emph{J. Diff. Geom.} 23 (1986) 299-307.
 
 \bibitem[MM]{MM83}
 Mori S., Mukai S.: The uniruledness of the moduli space of curves of genus 11. \emph{Algebraic Geometry. Proceedings Tokyo/Kyoto,} 334-353. Springer LNM 1016 (1983). 
 
 \bibitem[W1]{jW87}
 Wahl J.: The jacobian algebra of a graded Gorenstein singularity. \emph{Duke Math. J.} 55 (1987), 
   843-872.
   
  \bibitem[W2]{jW90}
 Wahl J.:  Gaussian maps on algebraic curves. \emph{J. Diff. Geometry} 32 (1990), 77-98.
 
 %\bibitem[W3]{jW97}
 %Wahl J.: The cohomology of the square of an ideal sheaf.  \emph{J. Algebraic Geom.} 6 (1997), 481-511.
 
 \end{thebibliography}
 \end{document}